\numberwithin{equation}{section} 
\numberwithin{figure}{section} 
  \theoremstyle{plain}
  \newtheorem*{thm*}{Theorem}
  \theoremstyle{plain}
  \newtheorem{thm}{Theorem}[section]
  \theoremstyle{definition}
  \newtheorem{defn}[thm]{Definition}
  \theoremstyle{plain}
  \newtheorem{lem}[thm]{Lemma}
  \theoremstyle{plain}
  \theoremstyle{plain}
  \theoremstyle{remark}
  \newtheorem{rem}[thm]{Remark}
  \theoremstyle{remark}
  \newtheorem*{acknowledgement*}{Acknowledgement}
\begin{document}

\title[Sprays metrizable by Finsler functions of constant flag curvature]
{Sprays metrizable by Finsler functions of constant flag curvature}

\author[Bucataru]{Ioan Bucataru} \address{Ioan Bucataru, Faculty of
  Mathematics, Alexandru Ioan Cuza University \\ Ia\c si, 
  Romania} \urladdr{http://www.math.uaic.ro/\textasciitilde{}bucataru/}

\author[Muzsnay]{Zolt\'an Muzsnay} \address{Zolt\'an Muzsnay, Institute
  of Mathematics, University of Debrecen \\Debrecen, 
  Hungary} \urladdr{http://www.math.klte.hu/\textasciitilde{}muzsnay/}

\date{\today}

\begin{abstract}
In this paper we characterize sprays that are metrizable by Finsler functions
of constant flag curvature. By solving a particular case of the Finsler
metrizability problem we provide the necessary and sufficient
conditions that can be used to decide whether or not a given homogeneous system of second order ordinary differential
equations represents the Euler-Lagrange equations of a Finsler function of constant
flag curvature. The conditions we provide are tensorial equations
on the Jacobi endomorphism. We identify the
class of homogeneous SODE where the Finsler metrizability is equivalent with
the metrizability by a Finsler function of constant flag curvature. 
\end{abstract}

\subjclass[2000]{53C60, 58B20, 49N45, 58E30}

\keywords{isotropic sprays, Finsler metrizability, flag curvature}

\maketitle

\section{Introduction}

The inverse problem of Lagrangian mechanics can be formulated as
follows: decide whether or not a given system of second order ordinary differential
equations (SODE) coincides with the Euler-Lagrange equations of some
Lagrangian, \cite{AT92, BD09, GM00, Krupkova97, MFLMR90, Sarlet82}. When the given system of SODE is homogeneous and the
Lagrangian to search for is the square of a Finsler function, the
problem is known as the Finsler metrizability problem, \cite{BM12,
  KS85, Muzsnay06, SV02}. If the sought after Lagrangian is a Finsler function, the problem is known as the
projective metrizability problem, or as the Finslerian version of
Hilbert's fourth problem, \cite{Alvarez05, BM11, BM12, Crampin07,
  CMS12, Yang11}.  

In this paper we address the special case of the Finsler
metrizability problem, where the Finsler function we seek for has
constant curvature. When the spray has zero constant curvature, then
there is no obstruction for the existence of a locally defined Finsler
structure that metricizes the given spray, \cite{BM11, Crampin07,
  Muzsnay06}. Therefore, in this work we will
focus on the case when the curvature is non-zero.  

In Theorem \ref{metrizablesc}, we solve the above mentioned problem by
providing a set of equations, which contains an algebraic equation $A)$ and two tensorial differential
equations $D_1)$ and $D_2)$ in \eqref{fmiso}, which have to be satisfied by the Jacobi
endomorphism. One of these two tensorial equations restricts the class of
homogeneous SODE (sprays), which we discuss, to the class
of isotropic sprays. Therefore, we focus our attention on isotropic
sprays and their relation with the Finsler metrizability problem. In
Theorem \ref{metrizables}, we characterize the class of
isotropic sprays $S$ for which the following conditions are
equivalent:
\begin{itemize}
\item $S$ is Finsler metrizable, \item $S$ is metrizable by a Finsler metric of scalar
  flag curvature; 
\item $S$ is Finsler metrizable by an Einstein metric;
\item $S$ is metrizable by a Finsler metric of constant 
  flag curvature; 
\item $S$ is Ricci constant.
\end{itemize}
If the Finsler function is reducible to a Riemannian metric, the equivalence of the above
conditions, on manifolds of dimension grater
than or equal to three, is always true for affine isotropic sprays, and it is not true
in the general Finslerian context. Therefore, Theorem \ref{metrizables}
identifies the class of isotropic sprays where this equivalence is
still true. The last condition is a technical one, can be
checked very easily, and it is common for both Theorems
\ref{metrizablesc} and \ref{metrizables}. Theorem \ref{metrizablesc}
has the advantage of being true for spray spaces of dimension grater
than or equal to two and it has the disadvantage of disregarding
Finsler metrizable sprays of non-constant flag curvature. The main
advantage of Theorem \ref{metrizables} is that it treats sprays for
which the Finsler metrizability is equivalent with the metrizability
by a Finsler metric of constant non-zero flag curvature. The key ingredient in proving Theorem \ref{metrizables} is the
Finslerian version of Schur's Lemma, \cite[Lemma 3.10.2]{BCS00}, which
is true only for spray spaces of dimension grater
than or equal to three. 

Since any spray on a two-dimension manifold is
isotropic, one of the two equations \eqref{fmiso} in Theorem
\ref{metrizablesc} simplify. In Theorem \ref{metrizables2} we
provide necessary and sufficient conditions for the metrizability of a
two-dimensional spray space by a Finsler function of constant (Gaussian) curvature.

To support our results, in the last section, we consider various examples of isotropic sprays that
satisfy, or not, one or more of the necessary and sufficient conditions, which we
provide, for Finsler metrizability. 

\section{Sprays and their geometric setting}

The natural geometric framework for studying systems of second order ordinary
differential equations is the tangent bundle of some configuration
manifold. 

In this work, $M$ denotes a $C^{\infty}$-smooth, real, and
$n$-dimensional manifold. We will denote by $TM$ its tangent
bundle and by $T_0M=TM\setminus\{0\}$ the tangent bundle with the zero
section removed. Local coordinate charts $(U, (x^i))$ on $M$ induce local coordinate
charts $(\pi^{-1}(U), (x^i, y^i))$ on $TM$, where $\pi: TM \to M$ is
the canonical submersion. We will assume that $M$ is a connected
manifold of dimension $n\geq
2$. Therefore, $TM$ and $T_0M$ are $2n$-dimensional connected manifolds.

In this section we discuss the natural geometric setting determined
by a spray $S$, which includes canonical nonlinear connection,
dynamical covariant derivative and curvature tensors. This setting, as
well as the proofs of our results in the next sections, are based on
the Fr\"olicher-Nijenhuis theory and the corresponding differential
calculus that can be developed on $TM$, \cite{GM00, KMS93, Szilasi03}.  There are two
canonical structures on $TM$, which we will use to develop our
setting. One is the tangent structure, $J$, and the other one is the
Liouville vector fiels, $\mathbb{C}$, locally given by 
\begin{eqnarray*} J=\frac{\partial}{\partial y^i} \otimes dx^i, \quad
  \mathbb{C}=y^i\frac{\partial}{\partial y^i}. \end{eqnarray*}

A system of homogeneous second order ordinary differential equations on a manifold
$M$, whose coefficients do not depend explicitly on time, can be
identified with a special vector field on $T_0M$ that is called a spray.  
A vector field $S\in \mathfrak{X}(T_0M)$ is called a \emph{spray} if
$JS=\mathbb{C}$ and $[\mathbb{C}, S]=S$. Locally, a spray $S$ is 
given by 
\begin{eqnarray}
S=y^i\frac{\partial}{\partial x^i} - 2G^i(x,y) \frac{\partial}{\partial
  y^i}, \label{locals}
\end{eqnarray} 
where functions $G^i(x,y)$ are smooth functions on domains of induced
coordinates on $T_0M$ and $2$-homogeneous with respect to the $y$-variable.

It is well known that a spray induces a nonlinear connection, with the
corresponding projectors $h$ and $v$ given by
\begin{eqnarray*}
h=\frac{1}{2}\left(\operatorname{Id}-\mathcal{L}_SJ\right), \quad
v=\frac{1}{2}\left(\operatorname{Id}+\mathcal{L}_SJ\right). 
\end{eqnarray*}
An important geometric structure induced by a spray is the \emph{Jacobi
endomorphism}, which is the vector valued semi-basic $1$-form given by
\begin{eqnarray*}
\Phi=v\circ \mathcal{L}_S h = \mathcal{L}_Sh \circ h.
\end{eqnarray*}
Locally, the Jacobi endomorphism, is given by 
\begin{eqnarray}
\Phi = R^i_j \frac{\partial}{\partial y^i}\otimes dx^j =
\left(2\frac{\partial G^i}{\partial x^j} - S\left(\frac{\partial
      G^i}{\partial y^j}\right) - \frac{\partial
      G^i}{\partial y^r} \frac{\partial
      G^r}{\partial y^j}\right) \frac{\partial}{\partial y^i}\otimes
  dx^j. \label{localphi} \end{eqnarray}
We consider also the \emph{Ricci curvature}, $Ric$, and  the
\emph{Ricci scalar}, $R$, \cite{BR04},
\cite[Def. 8.1.7]{Shen01}, which are given by
\begin{eqnarray}
\operatorname{Ric}=(n-1)R =
R^i_i=\operatorname{Tr}(\Phi). \label{ricr} \end{eqnarray}

For a spray $S$, we consider the map $\nabla: \mathfrak{X}(T_0M) \to
\mathfrak{X}(T_0M)$, given by \cite{BD09}
\begin{eqnarray}
\nabla = \mathcal{L}_S + h\circ \mathcal{L}_Sh + v\circ
\mathcal{L}_Sv. \label{nabla}
\end{eqnarray}  
We require that the action of $\nabla$ on scalar functions is given by
$\nabla f=S(f)$, for $f\in C^{\infty}(T_0M)$. Further requirements that $\nabla$
  satisfies the Leibnitz rule and commutes with contractions allow us to extend its
  action to arbitrary tensor fields on $T_0M$. We will
  refer to $\nabla$ as to the \emph{dynamical covariant derivative} induced by
  the spray $S$. Its action on semi-basic forms was called the 
  \emph{semi-basic derivation} and studied, in connection with the
  inverse problem of the calculus of variation, in \cite{GM00}.

\begin{defn} We say that a spray $S$ is \emph{isotropic} if its Jacobi endomorphism has the form,
\begin{eqnarray}
\Phi = R J - \alpha \otimes \mathbb{C}, \label{isophi}
\end{eqnarray}
where $R \in C^{\infty}(T_0M)$ is the Ricci scalar and $\alpha$ is a semi-basic
$1$-form on $T_0M$. 

A spray $S$ is called \begin{itemize} \item [i)] \emph{Ricci-constant}
  if $d_hR=0$, \item[ii)]  \emph{weakly Ricci-constant} if $S(R)=0.$  \end{itemize}
\end{defn}
It is easy to see that if a spray $S$ is Ricci constant, then it is also weakly Ricci constant.
Indeed, for a Ricci constant spray $S$, it follows that the Ricci scalar satisfies $d_hR=0$. Using the corresponding commutation
formula, we obtain 
$$0= i_Sd_hR = -d_hi_SR + \mathcal{L}_{hS}R + i_{[h, S]}R=S(R), $$ and
hence the spray $S$ is weakly Ricci constant. 

For an isotropic spray $S$, due to the homogeneity condition, it
follows that $0=\Phi(S)=(R - i_S\alpha)\mathbb{C}$ on $T_0M$ and hence
$R=i_S\alpha$. Isotropic sprays can be characterized using the Weyl
curvature, see Prop. 13.4.1 in \cite{Shen01}. The semi-basic vector valued $1$-form $\Phi$ is $2$-homogeneous, which
means $\Phi=[\mathbb{C}, \Phi]$. For an isotropic spray $S$, we have 
\begin{eqnarray*}
\Phi=[\mathbb{C}, \Phi] =[\mathbb{C}, R J] - [\mathbb{C}, \alpha
\otimes \mathbb{C}] = \left(\mathbb{C}(R) - R \right) J -
\mathcal{L}_{\mathbb{C}}\alpha \otimes \mathbb{C}, \end{eqnarray*} 
which implies $\mathbb{C}(R) = 2R$ and
$\mathcal{L}_{\mathbb{C}}\alpha = \alpha$. Therefore, the Ricci
curvature $Ric$ and the Ricci scalar $R$ are $2$-homogeneous, while the $1$-form $\alpha$ is $1$-homogeneous.

\begin{lem} \label{lem:dja} Consider $S$ an isotropic spray, whose Jacobi endomorphism is given
by formula \eqref{isophi}. The following two conditions are
equivalent
\begin{itemize}
\item[i)] $d_J\alpha =0$, \item[ii)] $d_JR  = 2\alpha.$
\end{itemize}
\end{lem}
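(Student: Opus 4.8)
The plan is to reduce the equivalence to a single first-order identity linking $i_Sd_J\alpha$, $\alpha$, and $d_JR$, from which both implications follow at once. I will use three ingredients already available: the integrability of the tangent structure, $[J,J]=0$, which gives $d_J^2=0$; the facts, established above, that $\alpha$ is semi-basic and $1$-homogeneous, so that $i_S\alpha=R$, $\mathcal{L}_{\mathbb{C}}\alpha=\alpha$, $\alpha\circ v=0$ and $\alpha\circ h=\alpha$; and the graded commutation rules of the Fr\"olicher--Nijenhuis calculus.

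The central step is the identity
\begin{eqnarray}
i_Sd_J\alpha = 2\alpha - d_JR. \label{eq:djakey}
\end{eqnarray}
To derive it intrinsically I would compute the graded commutator $i_Sd_J+d_Ji_S$. Writing $d_J=[i_J,d]$ and applying the graded Jacobi identity to $i_S$, $i_J$ and $d$ gives $[i_S,d_J]=[[i_S,i_J],d]+[i_J,[i_S,d]]$. The algebraic bracket is $[i_S,i_J]=i_{\mathbb{C}}$, since $JS=\mathbb{C}$, while $[i_S,d]=\mathcal{L}_S$ and $[i_J,\mathcal{L}_S]=-i_{\mathcal{L}_SJ}=i_{h-v}$, because $\mathcal{L}_SJ=v-h$. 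Together with the Cartan formula $[i_{\mathbb{C}},d]=\mathcal{L}_{\mathbb{C}}$ and the degree count $[i_S,d_J]=i_Sd_J+d_Ji_S$, this yields
\begin{eqnarray*}
i_Sd_J\alpha + d_J(i_S\alpha) = \mathcal{L}_{\mathbb{C}}\alpha + \alpha\circ(h-v).
\end{eqnarray*}
Now $d_J(i_S\alpha)=d_JR$, $\mathcal{L}_{\mathbb{C}}\alpha=\alpha$, and $\alpha\circ(h-v)=\alpha\circ h-\alpha\circ v=\alpha$ since $\alpha$ is semi-basic, which is exactly \eqref{eq:djakey}. As a check, the same identity follows at once in natural coordinates: there $d_J\alpha=(\partial\alpha_j/\partial y^i)\,dx^i\wedge dx^j$ and $d_JR=(\partial R/\partial y^i)\,dx^i$, and contracting the former with $S$ and inserting the Euler relation $y^k\,\partial\alpha_j/\partial y^k=\alpha_j$ together with $R=\alpha_ky^k$ reproduces \eqref{eq:djakey}.

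Granting \eqref{eq:djakey}, both directions are immediate. If $d_J\alpha=0$ then the left-hand side of \eqref{eq:djakey} vanishes and $d_JR=2\alpha$, which is i) $\Rightarrow$ ii). Conversely, if $d_JR=2\alpha$, applying $d_J$ and using $d_J^2=0$ gives $0=d_J(d_JR)=2\,d_J\alpha$, hence $d_J\alpha=0$, which is ii) $\Rightarrow$ i). I expect the only genuine obstacle to be the sign and degree bookkeeping in the commutation formula, specifically verifying $[i_S,i_J]=i_{\mathbb{C}}$ and the evaluation $\alpha\circ(h-v)=\alpha$; once \eqref{eq:djakey} is correctly in place the logic of the equivalence is trivial. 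It is worth noting that \eqref{eq:djakey} uses only the semi-basic and homogeneity constraints on the pair $(R,\alpha)$, so isotropy enters solely through the existence of such a pair with $i_S\alpha=R$.
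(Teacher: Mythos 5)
Your proposal is correct and follows essentially the same route as the paper: the paper's proof also rests on the identity $d_JR=d_Ji_S\alpha=-i_Sd_J\alpha+\mathcal{L}_{JS}\alpha+i_{[J,S]}\alpha=-i_Sd_J\alpha+2\alpha$, obtained from the same commutation rule you derive (using $JS=\mathbb{C}$, $[J,S]=h-v$, homogeneity and semi-basicness of $\alpha$), and it handles the converse exactly as you do, via $d_J^2=0$ from the integrability of $J$. The only difference is that you spell out the derivation of the commutation formula from the graded Jacobi identity and add a coordinate check, while the paper cites the rule directly.
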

\begin{proof}
Using the fact that $R=i_S\alpha$ and the commutation rules, it follows
\begin{eqnarray} d_JR=d_Ji_S \alpha = -i_S d_J\alpha + \mathcal{L}_{JS} \alpha +
i_{[J,S]}\alpha = -i_S d_J\alpha + 2\alpha. \label{djr1} \end{eqnarray}
In the above equations, we have used that $[J, S]=h-v$ and since $\alpha$ is a semi-basic $1$-form, it follows
that $i_{[J,S]}\alpha = i_h\alpha =\alpha$. Therefore, the assumption
$d_J\alpha=0$ implies that $d_JR=2\alpha$. The other implication is
straightforward, using the fact that the tangent structure $J$ is
integrable, and hence $d_J^2=0$. \end{proof}
It is known that the projective deformations of a spray preserves the
class of isotropic sprays. These deformations may preserve or not the
condition $d_J\alpha=0$. In subsection \ref{djao} we provide examples of isotropic sprays such
that $d_J\alpha \neq 0$. See formula \eqref{djap} for a convenient
choice of a $1$-homogeneous function $P\in C^{\infty}(T_0M)$. 

\section{Finsler metrizable sprays}

In this section we recall the notion of a Finsler space and its
geodesic spray. We will focus our discussions on Finsler spaces of scalar flag
curvature.

\begin{defn} \label{def:finsler} By a \emph{Finsler function} we mean a
  continuous function $F: TM \to \mathbb{R}$ satisfying the following
  conditions:
  \begin{itemize}
  \item[i)] $F$ is smooth and strictly positive on $T_0M$;
  \item[ii)] $F$ is positively homogeneous of order $1$, which means that
    $F(x,\lambda y)=\lambda F(x,y)$, for all $\lambda \geq 0$ and $(x,y)\in TM$;
  \item[iii)] The \emph{metric tensor} with components
    \begin{eqnarray} g_{ij}(x,y)=\frac{1}{2}\frac{\partial^2 F^2}{\partial
        y^i\partial y^j} \ \textrm{\  has\ rank\ n\ on}\ T_0M. \label{gij}
    \end{eqnarray} \end{itemize}
\end{defn} The above conditions of Definition \ref{def:finsler} imply
that the metric tensor $g_{ij}$ of a Finsler function is positive
definite on $T_0M$, see \cite{Lovas07}.  Some relaxations of the above conditions,
which lead to the notion of \emph{conic pseudo-Finsler metric}, were
proposed in \cite{JS12}. See also \cite[\S 1.1.2, \S 1.2.1]{AIM93} for more discussions
about the regularity conditions and their relaxation for a Finsler function. In subsection \ref{nvrc} we will discuss an
example of a spray metrizable by such a conic pseudo-Finsler function.
 
A Finsler function is \emph{reducible to a Riemannian metric} if the
metric tensor $g_{ij}$ in formula \eqref{gij} does not depend on the
fibre coordinates $y$. If $g_{ij}(x)$ is a Riemannian metric on $M$, then
$F: TM \to \mathbb{R}$, $F(x,y)=\sqrt{g_{ij}(x)y^iy^j}$ is a Finsler function.  

 The regularity condition iii) of Definition
\ref{def:finsler} is equivalent to the fact that the Poincar\'e-Cartan
$2$-form of $F^2$, $\omega_{F^2}=-dd_JF^2$, is non-degenerate and hence
it is a symplectic structure. Therefore, the equation
\begin{eqnarray} i_Sdd_JF^2=-dF^2 \label{isddj}
\end{eqnarray} uniquely determine a vector field $S$ on $T_0M$, which is
called the \emph{geodesic spray} of the Finsler function.  In this
work we will use more frequently, the equation
$\mathcal{L}_Sd_JF^2=dF^2$, which is equivalent to equation \eqref{isddj}.
\begin{defn} A spray $S\in \mathfrak{X}(T_0M)$ is called \emph{Finsler metrizable} if there
  exists a Finsler function $F$ that satisfies the equation \eqref{isddj}.
\end{defn} Necessary and sufficient criteria for the Finsler
metrizability problem for a spray $S$ were formulated in
\cite{Muzsnay06} using the holonomy distribution
$\mathcal{H}_S$. Also, such necessary and sufficient conditions were
formulated in terms of a semi-basic $1$-form in \cite{BD09}. We will
use these conditions in the next section to discuss the Finsler
metrizability problem of a particular class of isotropic sprays.

\begin{defn} \label{scalarflag} Consider $F$ a Finsler
  function and $\Phi$ the Jacobi endomorphism of its geodesic spray
  $S$.  \begin{itemize} \item[i)] $F$ is said to be
  of \emph{scalar (constant) flag curvature} if there exists a scalar
  function (constant) $\kappa$ on
  $T_0M$, such that
\begin{eqnarray}
\Phi=\kappa\left(F^2 J - Fd_JF \otimes
  \mathbb{C}\right). \label{scalarphi} \end{eqnarray}
\item[ii)] $F$ is called an \emph{Einstein metric} if there exists a
  function $\lambda\in C^{\infty}(M)$ such that the Ricci scalar satisfies
  $R(x,y)=\lambda(x) F^2(x,y).$ \end{itemize}
\end{defn}
The notion of flag curvature extends to the Finslerian setting the
concept of sectional curvature from the Riemannian setting. 

\begin{rem} \label{gis} If a Finsler function $F$ is reducible to a Riemannian metric
$g$ on a manifold of dimension grater than or equal to three, and $S$ is its geodesic spray, then the following conditions are equivalent, see
\cite[\S 13.4]{Shen01}:
\begin{itemize}
\item[i)] $S$ is isotropic;
\item[ii)] $g$ is of scalar curvature;
\item[iii)] $g$ is of constant curvature.
\end{itemize}
\end{rem}
In the general Finslerian context, conditions ii) and iii) above are not
equivalent anymore. In the next section, we provide the necessary and
sufficient condition for an isotropic geodesic spray such that this
equivalence remains true. See the equivalence of the conditions of Theorem \ref{metrizables}.

\section{Finsler metrizable isotropic sprays}

In this section we use the necessary and sufficient conditions,
expressed in terms of a semi-basic $1$-form, which were formulated in
\cite[Thm. 5.4]{BD09}, to discuss the
Finsler metrizability problem for some classes of isotropic sprays.

\subsection{Sprays metrizable by Finsler functions of constant
curvature}

In the next theorem we provide the necessary and
sufficient conditions one has to check if we want to decide if a spray is
metrizable by a Finsler function of non-zero constant
curvature.

\begin{thm} \label{metrizablesc}
Consider $S$ a spray with non-vanishing Ricci curvature. The spray $S$
is metrizable by a Finsler function of non-zero constant flag
curvature if and only if its Jacobi endomorphism satisfies the following equations:
\begin{eqnarray}
& A) & \operatorname{rank} d d_J(\operatorname{Tr} \Phi) =2n \nonumber \\
& D_1)&  2(n-1)\Phi - 2(\operatorname{Tr} \Phi) J + d_J(\operatorname{Tr} \Phi)
\otimes \mathbb{C}=0; \label{fmiso}  \\
& D_2)& d_h(\operatorname{Tr} \Phi)=0.  \nonumber
\end{eqnarray} 
\end{thm}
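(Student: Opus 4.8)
The plan is to prove both directions by translating the statement "metrizable by a Finsler function of constant flag curvature $\kappa\neq 0$" into conditions on the Jacobi endomorphism, using the scalar-flag-curvature formula \eqref{scalarphi} as the bridge. First I would address the forward direction. Suppose $F$ is a Finsler function of constant flag curvature $\kappa$ with geodesic spray $S$. Then by Definition \ref{scalarflag}, $\Phi=\kappa(F^2 J - Fd_JF\otimes\mathbb{C})$. Taking the trace and using $\operatorname{Tr}(J)=n$ and $\operatorname{Tr}(Fd_JF\otimes\mathbb{C})=Fi_{\mathbb{C}}d_JF=F\cdot\mathbb{C}(F)=F^2$ (by $1$-homogeneity of $F$), I get $\operatorname{Tr}\Phi = \kappa(n-1)F^2$. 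This is the key computation: it identifies the Ricci scalar $R$ with $\kappa(n-1)F^2$, so that $F^2$ is, up to the nonzero constant $\kappa(n-1)$, equal to $\operatorname{Tr}\Phi$. Substituting $F^2 = \operatorname{Tr}\Phi/(\kappa(n-1))$ back into \eqref{scalarphi} and simplifying $Fd_JF = \tfrac12 d_JF^2$ should yield exactly equation $D_1)$. For equation $A)$, the regularity condition iii) of Definition \ref{def:finsler} says $\omega_{F^2}=-dd_JF^2$ is a symplectic form, hence nondegenerate of rank $2n$; since $F^2$ is a nonzero constant multiple of $\operatorname{Tr}\Phi$, this gives $\operatorname{rank}dd_J(\operatorname{Tr}\Phi)=2n$. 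For $D_2)$, the constancy of $\kappa$ should feed into $d_hR=0$: since $R=\kappa(n-1)F^2$ with $\kappa$ constant, I would show $d_hF^2=0$ for a geodesic spray, which is the standard fact that $F^2$ is horizontally constant (equivalently $S$-invariance plus homogeneity forces $d_hF^2=0$).

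For the converse, I would start from a spray $S$ satisfying $A)$, $D_1)$, $D_2)$ with non-vanishing Ricci curvature, and attempt to reconstruct the Finsler function. The natural candidate is to set $E:=\operatorname{Tr}\Phi=R(n-1)$ and guess that, up to the unknown constant $\kappa$, the square of the Finsler function is proportional to $E$; concretely I expect $F^2 = E/(\kappa(n-1))$ for a suitable nonzero constant $\kappa$ still to be determined. The plan is then to verify that this $F$ is a genuine Finsler function metrizing $S$. Condition $A)$ guarantees that $dd_JE$ has rank $2n$, hence $-dd_JF^2$ is a symplectic form, giving the regularity condition iii). The $2$-homogeneity of $\Phi$ (hence of $E$, as $\mathbb{C}(E)=2E$, which follows from $\mathbb{C}(R)=2R$ established in the excerpt) makes $F$ positively $1$-homogeneous. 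The differential equation $D_1)$ then rearranges to show that $\Phi$ has the scalar-flag-curvature form \eqref{scalarphi}, so that $F$ has scalar flag curvature $\kappa$, and $D_2)$ combined with the identity $R=(n-1)^{-1}\operatorname{Tr}\Phi$ forces $d_h$ of the flag curvature to vanish, i.e. $\kappa$ is constant.

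The main obstacle I anticipate is verifying that the reconstructed $F$ is actually the geodesic spray of $S$, that is, checking $\mathcal{L}_S d_J F^2 = dF^2$ (equivalently \eqref{isddj}), rather than merely producing a function $F$ whose Jacobi endomorphism has the right algebraic shape. Conditions $D_1)$ and $D_2)$ are equations on $\Phi$, and the delicate point is to pull back from the curvature to the Euler-Lagrange / metrizability equation. Here I would invoke the necessary and sufficient conditions for Finsler metrizability expressed in terms of a semi-basic $1$-form from \cite[Thm.~5.4]{BD09}, which the excerpt explicitly advertises as the tool for this section. The strategy is to show that the semi-basic $1$-form $\theta:=\tfrac{1}{2\kappa(n-1)}d_JE$ (a candidate for $d_JF^2$) satisfies the hypotheses of that theorem: namely that $\mathcal{L}_S\theta=dE/(\kappa(n-1))$ is exact-compatible, that $d_J\theta=0$ (closedness under $d_J$, which by Lemma \ref{lem:dja} is tied to the isotropy structure since $D_1)$ forces $S$ to be isotropic with $\alpha$ proportional to $d_J R$), and that the associated metric tensor is nondegenerate (supplied by $A)$). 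The technical heart is therefore to extract, from the three curvature equations, precisely the input required by \cite[Thm.~5.4]{BD09}; the nondegeneracy $A)$ and the horizontal condition $D_2)$ are the two ingredients that upgrade a formally isotropic spray to a genuinely metrizable one of constant flag curvature.
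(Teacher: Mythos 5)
Your first direction (metrizable $\Rightarrow$ $A)$, $D_1)$, $D_2)$) is essentially the paper's, which treats it as a straightforward computation: from \eqref{scalarphi} one gets $\operatorname{Tr}\Phi=\kappa(n-1)F^2$, and then $A)$ follows from regularity of $F$, $D_1)$ from substituting back, and $D_2)$ from $d_hF^2=0$. (One notational slip: the trace of $Fd_JF\otimes\mathbb{C}$ is $F\,i_Sd_JF=F\,\mathbb{C}(F)=F^2$, not $F\,i_{\mathbb{C}}d_JF$, which vanishes since $J\mathbb{C}=0$.)

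The genuine gap is in the converse. You correctly single out the crucial step --- showing that $S$ is the geodesic spray of the reconstructed $F$, i.e.\ $\mathcal{L}_Sd_JF^2=dF^2$ --- but you never prove it: you list ``$\mathcal{L}_S\theta=dE/(\kappa(n-1))$'' among the \emph{hypotheses to check} for \cite[Thm.~5.4]{BD09}, yet this identity \emph{is} the metrizability equation, and your proposal contains no derivation of it from $A)$, $D_1)$, $D_2)$; deferring it to ``extracting the input required'' by a cited theorem is circular. The paper closes exactly this step with a short commutation computation that is the real content of the proof: $D_1)$ gives $2\alpha=d_JR$; $D_2)$ gives $d_hR=0$ and, contracting with $S$, also $S(R)=0$; hence
\begin{eqnarray*}
\mathcal{L}_Sd_JR \;=\; d_J\bigl(S(R)\bigr) + d_{[S,J]}R \;=\; d_{v-h}R \;=\; d_vR \;=\; dR - d_hR \;=\; dR,
\end{eqnarray*}
using $[S,J]=\mathcal{L}_SJ=v-h$. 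Setting $F^2=\operatorname{sign}(R)\,R$ (this also fixes your undetermined constant: $\kappa=\operatorname{sign}(R)$), condition $A)$ gives regularity and the displayed identity gives $\mathcal{L}_Sd_JF^2=dF^2$ directly, with no appeal to \cite[Thm.~5.4]{BD09} at all. If you insist on routing through that theorem, the condition you actually need to verify is $d_h\theta=0$ for $\theta\propto d_JR$, which follows from $D_2)$ via torsion-freeness of the canonical connection ($[J,h]=0$, so $d_hd_JR=-d_Jd_hR=0$); your list omits this and substitutes the conclusion for it. Relatedly, you misallocate the role of $D_2)$: you claim it forces the flag curvature to be constant, but in your setup $\kappa$ is constant by construction once $D_1)$ puts $\Phi$ in the form \eqref{scalarphi} with $F^2\propto R$; the actual role of $D_2)$ is precisely the computation above, which your plan leaves unexecuted.
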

\begin{proof}
Consider $S$ a spray with non-vanishing Ricci curvature. We assume
that its Jacobi endomorphism, $\Phi$, satisfies the
algebraic assumption $A)$  as well as the two tensorial equations
\eqref{fmiso}. Since $\Phi$ satisfies $D_1)$, it
follows that the the Jacobi endomorphism is given by formula
\eqref{isophi}, where $2(n-1)\alpha=(n-1) d_JR=d_J(\operatorname{Tr}
\Phi)$. Therefore the spray $S$ is isotropic
and satisfies the condition $d_J\alpha=0$. 

Due to condition $D_2)$, we have that
$S$ is Ricci constant and, as we have seen already, it follows that the spray $S$ is weakly Ricci constant. 

Using the fact that $2\alpha=d_JR$ we obtain
\begin{eqnarray} 2\mathcal{L}_S\alpha = \mathcal{L}_Sd_JR = d_{[S,
    J]}R + d_J\mathcal{L}_S R = d_vR = dR. \label{lsadr}
\end{eqnarray}
Within the assumption that the Ricci curvature does not vanish on
$T_0M$, we may consider the function $F>0$ such that
$F^2=\operatorname{sign}(R) R > 0$ on $T_0M$.  Since $dd_J (\operatorname{Tr} \Phi) =
(n-1) dd_JR = (n-1)dd_J F^2$, the assumption $A)$ assures that $F$ is a
Finsler function. The condition $2\alpha=d_JR$ reads now
$2\alpha=d_JF^2$ and using formula \eqref{lsadr} we obtain $\mathcal{L}_S
d_JF^2=dF^2$, which means that $S$ is the geodesic spray of the Finsler
function $F$. 

We replace $\operatorname{Tr} \Phi = (n-1)F^2=(n-1)R=(n-1)i_S\alpha$ and $d_J(\operatorname{Tr}
\Phi)  = 2(n-1) Fd_JF = 2(n-1)\alpha=(n-1) d_JR$ in the expression for
$\Phi$ and obtain formula \eqref{scalarphi} for $\kappa=\operatorname{sign}(R)$. It follows
that spray $S$ is Finsler metrizable by the Finsler function $F$ of
constant curvature $\kappa=\operatorname{sign}(R)$. 

Conversely, if the spray $S$ is Finsler metrizable by a Finsler function of non-zero constant flag
curvature then its Jacobi endomorphism is given by formula
\eqref{scalarphi}. It is a straightforward computation to see that
$\Phi$ satisfies all three conditions $A)$, $D_1)$ and $D_2)$ in \eqref{fmiso}.
\end{proof}

The algebraic condition $A)$ assures the regularity condition for the
sought after Finsler function. Condition $D_1)$ is equivalent to the fact that the spray $S$ is
isotropic, its Jacobi endomorphism is given by formula \eqref{isophi}, where
$2(n-1)\alpha = d_J(\operatorname{Tr} \Phi)$ and hence satisfies the
condition $d_J\alpha=0$. Condition $D_2)$ says that the spray $S$ is Ricci constant. 

Conditions $D_1)$ and $D_2)$ in \eqref{fmiso} are very useful to
decide whether or not a given spray is metrizable by a Finsler
function of non-zero constant curvature. However, there are metrizable
sprays by Finsler functions of non-constant flag curvature, and hence where conditions
$D_1)$ and $D_2)$ cannot be used. See the example in subsection
\ref{djano}. In Theorem \ref{metrizables} we will
strengthen the result of Theorem \ref{metrizablesc} by limiting our
discussion to the case where Finsler metrizability is equivalent to
the metrizability by a Finsler function of constant curvature. In
this discussion, we use the Finslerian version of Schur's Lemma,
\cite[Lemma 3.10.2]{BCS00}. Therefore, we will have to limit our
considerations to the case where 
$\operatorname{dim} M\geq 3$. The case $\operatorname{dim} M=2$ will
be treated separately.   

\subsection{dim $M\geq 3$}

Consider $S$ an isotropic spray, whose Jacobi endomorphism is given
by formula \eqref{isophi}.  In the next theorem we will see that the
condition $d_J\alpha=0$ is the best we can require to make sure that the three equivalent conditions in Remark
\ref{gis} are also true in the Finslerian context. We add two more
conditions, the last one is condition $D_2)$ in Theorem
\ref{metrizablesc}. 

\begin{thm} \label{metrizables}
Consider $S$ a spray of non-vanishing Ricci curvature. Then, the spray is isotropic, satisfies the algebraic
condition $A)$, and the condition
$d_J\alpha=0$, if and only if the following five conditions are equivalent:
\begin{itemize}
\item[i)] $S$ is Finsler metrizable; 
\item[ii)] $S$ is metrizable by a Finsler metric of non-vanishing scalar
  flag curvature; 
\item[iii)] $S$ is Finsler metrizable by an Einstein
metric; 
\item[iv)] $S$ is metrizable by a Finsler metric of non-zero constant 
  flag curvature; 
\item[v)] $S$ is Ricci constant. \end{itemize} 
\end{thm}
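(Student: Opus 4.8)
The statement is a biconditional: on one side the structural hypotheses that $S$ be isotropic, satisfy $A)$, and satisfy $d_J\alpha=0$; on the other the mutual equivalence of (i)--(v). The bulk of the work is the forward implication, and there the plan is to produce a strongly connected cycle of implications. I would prove (i)$\Rightarrow$(ii), (ii)$\Rightarrow$(iv), (iv)$\Rightarrow$(iii), (iii)$\Rightarrow$(ii), the two arrows (iv)$\Leftrightarrow$(v), and the trivial (ii)$\Rightarrow$(i); together these tie all five conditions into one equivalence class. Throughout I will use $\operatorname{Tr}\Phi=(n-1)R$, the equivalence $d_J\alpha=0\Leftrightarrow d_JR=2\alpha$ from Lemma \ref{lem:dja}, and $R=i_S\alpha\neq 0$. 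I also note that the standing hypotheses are exactly conditions $A)$ and $D_1)$ of Theorem \ref{metrizablesc}, since isotropy together with $d_J\alpha=0$ gives $2(n-1)\alpha=d_J(\operatorname{Tr}\Phi)$.

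The key step, and the one I expect to be the main obstacle, is (i)$\Rightarrow$(ii): passing from bare Finsler metrizability to scalar flag curvature. Let $F$ metrize $S$, with metric $g$. I would invoke the self-adjointness of the Jacobi endomorphism with respect to $g$, i.e. the symmetry $g_{ki}R^i_j=g_{ji}R^i_k$ of the lowered Jacobi endomorphism, a standard property of the geodesic spray of any Finsler metric. Substituting the isotropic components $R^i_j=R\delta^i_j-y^i\alpha_j$ yields $\ell_k\alpha_j=\ell_j\alpha_k$, where $\ell_k=g_{ki}y^i=\frac{1}{2}\frac{\partial F^2}{\partial y^k}$; since $\ell_iy^i=F^2>0$ this forces $\alpha$ to be proportional to $\frac{1}{2}d_JF^2=Fd_JF$, and contracting with $S$ (using $R=i_S\alpha$ and $(d_JF)(S)=F$ by $1$-homogeneity) pins the factor down to give $\alpha=\kappa Fd_JF$ and $R=\kappa F^2$ with $\kappa=R/F^2\neq 0$. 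Hence $\Phi=\kappa(F^2J-Fd_JF\otimes\mathbb{C})$, which is \eqref{scalarphi}: $F$ has non-vanishing scalar flag curvature.

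For (ii)$\Rightarrow$(iv) I would feed in the standing hypothesis $d_J\alpha=0$. Writing $R=\kappa F^2$, $\alpha=\kappa Fd_JF$ and computing $d_JR=F^2 d_J\kappa+2\alpha$, the relation $d_JR=2\alpha$ forces $d_J\kappa=0$, i.e. $\kappa=\kappa(x)$ depends on position only. This is precisely the hypothesis of the Finslerian Schur lemma \cite[Lemma 3.10.2]{BCS00}, whose use requires $\dim M\geq 3$; it upgrades $\kappa$ to a constant, so $F$ has non-zero constant flag curvature, which is (iv). The remaining arrows are short. Both (iv)$\Rightarrow$(v) and (v)$\Rightarrow$(iv) follow from Theorem \ref{metrizablesc}, since $A)$ and $D_1)$ hold by hypothesis and (v) is exactly $D_2)$. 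For (iv)$\Rightarrow$(iii) I take the trace in \eqref{scalarphi}, getting $\operatorname{Tr}\Phi=(n-1)\kappa F^2$, hence $R=\kappa F^2$ with $\kappa$ constant, an Einstein metric. For (iii)$\Rightarrow$(ii), an Einstein metric satisfies $R=\lambda(x)F^2$ with $d_J\lambda=0$, so $2\alpha=d_JR=2\lambda Fd_JF$ gives $\alpha=\lambda Fd_JF$ and $\Phi=\lambda(F^2J-Fd_JF\otimes\mathbb{C})$, again \eqref{scalarphi}. Finally (ii)$\Rightarrow$(i) is immediate.

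For the converse one assumes (i)--(v) to be equivalent for $S$; the substantive case is when these (then simultaneously true) conditions hold, so that (iv) provides a Finsler function $F$ of non-zero constant flag curvature, $\Phi=\kappa(F^2J-Fd_JF\otimes\mathbb{C})$ with $\kappa$ a non-zero constant. Reading this formula off shows at once that $S$ is isotropic with $R=\kappa F^2$ and $\alpha=\kappa Fd_JF$; that $d_J\alpha=\kappa\,d_J(Fd_JF)=0$ because $d_J^2=0$ and $d_JF\wedge d_JF=0$; and that $dd_J(\operatorname{Tr}\Phi)=(n-1)\kappa\,dd_JF^2$ has rank $2n$ by the regularity of $F$, which is condition $A)$. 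Thus the three structural conditions follow. I would flag that this direction draws its content from the effective form of the equivalent conditions, through condition (iv), which alone guarantees a metric of constant flag curvature.
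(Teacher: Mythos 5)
Your proposal is correct, and its overall architecture matches the paper's: a cycle of implications under the standing hypotheses, Lemma \ref{lem:dja} to convert $d_J\alpha=0$ into $2\alpha=d_JR$, the Finslerian Schur Lemma \cite[Lemma 3.10.2]{BCS00} where $\dim M\geq 3$ enters, Theorem \ref{metrizablesc} to recover metrizability from Ricci constancy, and a converse read off from condition iv) via formula \eqref{scalarphi}. The one genuinely different ingredient is the key step $i)\Rightarrow ii)$. The paper derives $\alpha\wedge d_JF^2=0$ intrinsically, from the Helmholtz condition $d_\Phi\theta=0$ of \cite[Thm. 4.1]{BD09} applied to $\theta=d_JF^2$ together with the isotropic form \eqref{isophi} of $\Phi$; you instead invoke the classical local fact that the Jacobi endomorphism is self-adjoint with respect to $g$, write $g_{ki}R^i_j=g_{ji}R^i_k$, and contract $\ell_k\alpha_j=\ell_j\alpha_k$ with $y^k$ to get $\alpha=\kappa Fd_JF$, $R=\kappa F^2$. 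These rest on the same underlying symmetry ($d_\Phi\theta=0$ is exactly the statement that the lowered Jacobi endomorphism is symmetric), but your route is the coordinate one that the paper explicitly sets aside: it notes that this step "coincides with Lemma 8.2.2 in \cite{Shen01}, where the proof uses different (local) techniques", and its stated purpose is to reprove it within the Fr\"olicher--Nijenhuis calculus. Your version is shorter if self-adjointness is accepted as known; the paper's is self-contained within its formalism. Two smaller divergences are worth noting and both are sound: your implication graph ($i)\Rightarrow ii)\Rightarrow iv)$, $iv)\Rightarrow iii)\Rightarrow ii)$, $iv)\Leftrightarrow v)$, $ii)\Rightarrow i)$) replaces the paper's single cycle $i)\Rightarrow ii)\Rightarrow iii)\Rightarrow iv)\Rightarrow v)\Rightarrow i)$, and your observation that both directions of $iv)\Leftrightarrow v)$ fall out of Theorem \ref{metrizablesc} -- because the standing hypotheses are exactly $A)$ and $D_1)$ of \eqref{fmiso} -- cleanly consolidates what the paper does in two separate places. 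Finally, you treat the converse exactly as the paper does, by assuming that the (equivalent) conditions actually hold so that iv) supplies the metric; you are right to flag this, since the literal hypothesis is only the mutual equivalence of i)--v), and the paper makes the same silent strengthening.
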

\begin{proof}
We assume that the spray is isotropic, satisfies the algebraic
condition $A)$, and the condition
$d_J\alpha=0$.  Within these assumptions, we will prove the following implications $i)\Longrightarrow ii)
\Longrightarrow iii) \Longrightarrow iv) \Longrightarrow v)
\Longrightarrow i)$.  For some of these implications we will not need
the condition $d_J\alpha=0$. More exactly, we will use it for the
implications $ ii) \Longrightarrow iii) $, $ iii) \Longrightarrow iv)$, and $v)
\Longrightarrow i)$.  Also, the assumption $\operatorname{dim} M\geq
3$ will be needed only for the implication $ iii) \Longrightarrow iv) $. 

In order to prove that condition i) implies condition ii) we assume
that the spray  $S$ is Finsler metrizable. 
We will prove that the corresponding Finsler space has scalar flag curvature. This
result coincides with Lemma 8.2.2 in \cite{Shen01}, where the proof
uses different (local) techniques. Our proof is based on the
differential calculus on $T_0M$ associated to a spray within the
Fr\"olicher-Nijenhuis formalism.

We consider $F$ a Finsler function that metricizes the spray $S$. It
follows that $S$ is the unique solution of the equation
$\mathcal{L}_Sd_JF^2=dF^2$. Therefore, the corresponding Helmholtz
conditions are satisfied. One of these Helmholtz conditions involves
the Jacobi endomorphism $\Phi$, and can be expressed using the
semi-basic $1$-form $\theta=d_JF^2$ as follows, see \cite[Thm. 4.1.]{BD09},
\begin{eqnarray}
0=d_{\Phi}\theta = d_{R J - \alpha \otimes \mathbb{C}}\theta =
R d_J\theta - d_{\alpha\otimes \mathbb{C}} \theta = -\alpha
\wedge \mathcal{L}_{\mathbb{C}}\theta = -\alpha \wedge \theta. \label{dphith}\end{eqnarray}
In the above formula we used $d_J\theta=d^2_JF^2=0$, since
$d^2_J=d_{[J,J]}=0$, and $\mathcal{L}_{\mathbb{C}}\theta = \theta$,
since $\theta$ is a $1$-homogeneous $1$-form. Helmholtz condition
\eqref{dphith} implies $\alpha\wedge d_JF^2=0$ and hence there exists
a function $\kappa \in C^{\infty}(T_0M)$ such that $\alpha =\kappa
d_JF^2/2=\kappa Fd_JF$. It follows that $R=i_S\alpha = \kappa F^2$
and the Jacobi endomorphism \eqref{isophi} can be written now as
in formula \eqref{scalarphi}, which shows that the Finsler space $(M,
F)$ has scalar flag curvature $\kappa$. 

In order to prove that condition $ii)$ implies condition $iii)$ we
make use of the differential assumption $d_J\alpha=0$. According to
Lemma \ref{lem:dja} it follows $d_J\alpha=0$ is equivalent to
$2\alpha=d_JR$. Using the fact that $R= \kappa F^2$ and $2\alpha
=\kappa  d_JF^2$ it follows 
$$2\alpha = d_JR = F^2 d_J\kappa + \kappa d_JF^2 = F^2 d_J\kappa +
2\alpha.$$ Above formulae imply that $d_J\kappa=0$ and hence the
scalar flag curvature $\kappa$ does not depend on the flagpole
$y$. It follows that $R(x,y)= \lambda(x) F^2(x,y)$, where
$\lambda(x)=\kappa(x)$. Hence, the Finsler Function $F$ is an
Eistein metric that metricizes the spray $S$.

In order to prove that condition $iii)$ implies condition $iv)$ we
will also make use of Schur's Lemma. We know that the spray $S$ is Finsler metrizable
by an Einstein metric $F$. Then, there exists a non-vanishing function $\lambda\in
C^{\infty}(M)$ such that $R(x,y)= \lambda(x) F^2(x,y)$. Since $S$ is
isotropic, it follows that its Jacobi endomorphism is given by formula
\begin{eqnarray*}
\Phi=\lambda F^2 J - \alpha\otimes \mathbb{C}. \end{eqnarray*}
Now we use the assumption that $d_J\alpha=0$, which by Lemma
\ref{lem:dja} implies that $2\alpha=d_JR=\lambda d_JF^2$. This implies
that the Jacobi endomorphism is given by formula \eqref{scalarphi},
where $\kappa(x)=\lambda(x)$ is the scalar flag curvature. Since $\operatorname{dim} M\geq
3$, we use  Schur's Lemma and obtain that $\kappa$ is a non-zero constant.
This implies that the spray $S$ is metrizable by the Finsler function
$F$ of constant flag curvature. 

The implication $iv) \Longrightarrow  v)$ is
straightforward. Consider $S$ the geodesic spray of Finsler metric
$F$ of non-zero constant flag curvature $\kappa$. It follows that the Ricci scalar is given by $R= \kappa F^2$,
where $\kappa$ is a constant. Therefore, $d_hR=\kappa d_hF^2=0$ since
$d_hF^2=0$.

For the last implication $v) \Longrightarrow  i)$, we use the first
implication of Theorem \ref{metrizablesc}. Since $S$ is isotropic and
satisfies $d_J\alpha=0$ it follows that the Jacobi endomorphism $\Phi$
satisfies equation $D_1)$ in \eqref{fmiso}. The fact that $S$ is Ricci
constant means that $\Phi$ satisfies also equation $D_2)$ in
\eqref{fmiso}.  By Theorem \ref{metrizablesc} we obtain that the spray
$S$ is Finsler metrizable. 

To conclude the proof, we have to show that if a spray $S$ satisfies
one of the five equivalent conditions of the theorem, then necessarily
$S$ is isotropic, satisfies the algebraic
condition $A)$, as well as the condition 
$d_J\alpha=0$. We assume that condition iv) is satisfied and hence
the spray $S$ is Finsler metrizable by a Finsler metric $F$ of
non-zero constant curvature $\kappa$. It follows that its Jacobi endomorphism is
  given by formula \eqref{scalarphi} and hence $\alpha=\kappa
  Fd_JF=\kappa d_J F^2$. Since $F$ is a Finsler function we obtain
  that $\operatorname{rank} dd_JF^2=2n$ and hence the algebraic condition $A)$ is
  satisfied. The condition $d_J\alpha=0$ is also satisfied. 
\end{proof}
If a Finsler function is reducible to a Riemannian metric, and $S$ is
its isotropic geodesic spray, it follows that $\alpha =\kappa d_JF^2$, where $d_J\kappa=0$, and hence we always have
$d_J\alpha=0$. This shows that the equivalence of 
conditions i), ii) and iv) of Theorem \ref{metrizables} generalize to
the Finslerian context the equivalence of
the three conditions of Remark \ref{gis}.

For Theorem \ref{metrizables}, the condition $\operatorname{dim} M\geq 3$ was very important, since it
allowed us to use the Finslerian version of Schur's Lemma. 

\subsection{dim $M$=2}

In this subsection we pay attention to the Finsler metrizability problem
on $2$-dimensional manifolds. It is known that any spray on
a $2$-dimensional manifold is isotropic. This result allows us to simplify
the two conditions \eqref{fmiso} of Theorem \ref{metrizablesc}.

\begin{thm} \label{metrizables2} Consider $S$ a spray of non-vanishing
  Ricci curvature that satisfies the algebraic
condition $A)$, for $n=2$.
\begin{itemize}
\item[i)] The spray $S$ is isotropic, which means that its Jacobi
  endomorphism  is given by formula \eqref{isophi}, where the
  semi-basic $1$-form $\alpha=\alpha_i dx^i$ has the components
\begin{eqnarray}
\alpha_1=\frac{R^2_2}{y^1}=\frac{-R^2_1}{y^2}, \quad
\alpha_2=\frac{-R^1_2}{y^1}=\frac{R^1_1}{y^2}. \label{alpha12} \end{eqnarray}
\item[ii)] The spray $S$ is metrizable by a Finsler function of non-zero
  constant curvature if and only it satisfies the following two
  conditions 
\begin{eqnarray}
d_J\alpha =0, \quad d_hR =0. \label{fm2} 
\end{eqnarray}
\end{itemize}
\end{thm}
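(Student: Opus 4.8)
The plan is to treat the two parts separately, since part i) is a purely algebraic computation in the $n=2$ fibre, while part ii) reduces the problem to the already-established Theorem \ref{metrizablesc} once part i) is in hand. For part i), I would start from the defining property that in dimension two every spray is isotropic, so the Jacobi endomorphism must already have the form $\Phi = R J - \alpha \otimes \mathbb{C}$ with $\alpha$ a semi-basic $1$-form. Writing $\alpha = \alpha_i\, dx^i$ and comparing with the local expression $\Phi = R^i_j\, \partial/\partial y^i \otimes dx^j$ from \eqref{localphi}, I would extract the component identities $R^i_j = R\,\delta^i_j - \alpha_j\, y^i$. The task is then to solve this linear system for the $\alpha_j$ in terms of the $R^i_j$ and $y^i$. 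Taking the trace recovers $\operatorname{Tr}\Phi = nR - \alpha_i y^i = 2R - R = R$ (using $i_S\alpha = R$ from the homogeneity discussion), which is consistent; the off-diagonal and diagonal entries then give four scalar equations in the two unknowns $\alpha_1,\alpha_2$, and solving any compatible pair yields the four expressions in \eqref{alpha12}.

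The main technical content of part i) is checking that the four formulas in \eqref{alpha12} are mutually consistent, i.e. that the two expressions given for each $\alpha_k$ genuinely agree. I expect this to be the one step requiring real care rather than bookkeeping. Concretely, the equalities $R^2_2/y^1 = -R^2_1/y^2$ and $-R^1_2/y^1 = R^1_1/y^2$ are not automatic from \eqref{localphi} alone; they encode exactly the isotropy condition $\Phi = RJ - \alpha\otimes\mathbb{C}$ specialized to $n=2$. I would derive them by exploiting $\Phi(S) = (R - i_S\alpha)\mathbb{C} = 0$, which in components reads $R^i_j y^j = 0$ for the part of $\Phi$ beyond the $RJ$ term, equivalently $R^i_j y^j = R\, y^i$. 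Expanding $R^i_j y^j = R y^i$ for $i=1,2$ gives two relations among the $R^i_j$ that, together with $\operatorname{Tr}\Phi = R$, force precisely the cross-equalities in \eqref{alpha12}. This is where the two-dimensionality is used essentially: with exactly two fibre coordinates the homogeneity constraint $R^i_j y^j = R y^i$ pins down all four entries in terms of $R$ and the single $1$-form $\alpha$.

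For part ii), the strategy is to invoke Theorem \ref{metrizablesc} and show that for $n=2$ its three hypotheses $A)$, $D_1)$, $D_2)$ collapse to the two conditions \eqref{fm2}. The algebraic condition $A)$ is assumed throughout, so it plays no role in the equivalence being proved. The key observation is that condition $D_1)$, namely $2(n-1)\Phi - 2(\operatorname{Tr}\Phi) J + d_J(\operatorname{Tr}\Phi)\otimes\mathbb{C} = 0$, becomes for $n=2$ the statement $2\Phi - 2RJ + d_JR \otimes \mathbb{C} = 0$; given that part i) already guarantees $\Phi = RJ - \alpha\otimes\mathbb{C}$ automatically, $D_1)$ reduces to $-2\alpha\otimes\mathbb{C} + d_JR\otimes\mathbb{C} = 0$, i.e. $d_JR = 2\alpha$, which by Lemma \ref{lem:dja} is equivalent to $d_J\alpha = 0$. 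Meanwhile $D_2)$, which reads $d_h(\operatorname{Tr}\Phi) = 0 = d_hR$, is literally the second condition in \eqref{fm2}. Thus the two conditions \eqref{fm2} are together equivalent to $D_1)$ and $D_2)$ under the standing isotropy supplied by part i), and applying Theorem \ref{metrizablesc} gives metrizability by a Finsler function of non-zero constant curvature.

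The forward-looking caveat is that I would need to be slightly careful about one degenerate point: the formulas \eqref{alpha12} divide by $y^1$ and $y^2$, so each expression is only valid on the open set where the respective denominator is nonzero. Because the two presentations of each $\alpha_k$ use complementary denominators, the consistency established above ensures $\alpha$ is globally well-defined as a smooth semi-basic $1$-form on all of $T_0M$ (the set where both $y^1=0$ and $y^2=0$ is the zero section, already excluded). I expect no further obstruction, since once isotropy and the reductions of $D_1)$, $D_2)$ are in place, the conclusion is immediate from Theorem \ref{metrizablesc}; the genuine work is entirely in the algebra of part i).
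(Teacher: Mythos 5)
Your proposal is correct and follows essentially the same route as the paper: part i) rests on the homogeneity identity $\Phi(S)=0$, i.e.\ $R^i_jy^j=0$ for the components in \eqref{localphi}, which simultaneously establishes isotropy and the consistency of the four expressions in \eqref{alpha12}; part ii) uses the isotropic form to reduce $D_1)$ of \eqref{fmiso} to $2\alpha=d_JR$ (hence to $d_J\alpha=0$ via Lemma \ref{lem:dja}), observes that $D_2)$ is literally $d_hR=0$, and invokes Theorem \ref{metrizablesc}, exactly as the paper does. One slip should be repaired: your parenthetical restatement that $\Phi(S)=0$ reads ``equivalently $R^i_jy^j=R\,y^i$'' is false for the components of the full Jacobi endomorphism (it contradicts $R^i_jy^j=0$ wherever $R\neq 0$), and expanding that incorrect relation yields $R^1_2/y^1=R^2_2/y^2$ and $R^2_1/y^2=R^1_1/y^1$, which are \emph{not} the cross-equalities of \eqref{alpha12}; the derivation must expand $R^i_jy^j=0$, which directly gives $R^1_1/y^2=-R^1_2/y^1$ and $R^2_2/y^1=-R^2_1/y^2$. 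Relatedly, since 2-dimensional isotropy is precisely what part i) asserts, you should not ``start from the defining property that in dimension two every spray is isotropic'' but derive the form \eqref{isophi} from $R^i_jy^j=0$ together with $R=R^1_1+R^2_2$ (a componentwise verification), which is what your later paragraph effectively does and what the paper's proof consists of.
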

\begin{proof}
i) Due to the homogeneity conditions of the spray we have that
$\Phi(S)=0$ and by formula \eqref{localphi} we obtain $R^i_jy^j=0$. It follows that $\Phi$
can be written as in formula \eqref{isophi}, where $R=R^1_1+R^2_2$ and
the semi-basic $1$-form $\alpha=\alpha_i dx^i$ has the components \eqref{alpha12}. 

Since any spray $S$ is isotropic we have that the Jacobi endomorphism
$\Phi$ satisfies equation $D_1)$ in \eqref{fmiso} if and only if it
satisfies first equation in \eqref{fm2}. This part is then a
consequence of Theorem \ref{metrizablesc}. 
\end{proof}

The first part of Theorem \ref{metrizables2} coincides with Lemma 8.1.10
in \cite{Shen01} and formulae \eqref{alpha12} coincide with formulae
(8.37) and (8.38) in \cite{Shen01}.  

The two conditions \eqref{fm2} can be written as follows
\begin{eqnarray}
\label{fm2local} & d_J\alpha  =  0 & \Leftrightarrow 2\alpha=d_JR  \Leftrightarrow \frac{\partial \alpha_1}{\partial y^2} = \frac{\partial
  \alpha_2}{\partial y^1} \Leftrightarrow 2\alpha_1=\frac{\partial
  R}{\partial y^1} \textrm{ and }  2\alpha_2=\frac{\partial
  R}{\partial y^2}   \\ \nonumber
& d_hR = 0 & \Leftrightarrow \frac{\delta R}{\delta x^1} = \frac{\delta
  R}{\delta x^2}=0, \textrm{ where } \frac{\delta}{\delta
  x^i}=h\left(\frac{\partial}{\partial x^i}\right) =
\frac{\partial}{\partial x^i} - \frac{\partial G^j}{\partial y^i} \frac{\partial}{\partial y^j}.
\end{eqnarray}
We will use the above conditions in the next section to test whether or
not sprays on a $2$-dimensional manifold are metrizable by Finsler
functions of constant curvature. 

\section{Examples}

In this section we provide examples to show the consistency of the
conditions we discussed so far.

\subsection{The case $d_J\alpha=0$} \label{djao}
It is well known that the class of isotropic sprays is invariant under
projective deformations of sprays. We start with a Finsler
metrizable isotropic spray, $S_0$, 
which satisfies the condition $d_J\alpha=0$. We will
study projective deformations of $S_0$, which also satisfy the condition
$d_J\alpha=0$. Within this projective deformations, we will seek for
those which do not preserve the condition of being Ricci
constant. Using Theorem \ref{metrizables}, this will lead us to    
examples of non-Finsler metrizable isotropic sprays. 

Let $S_0$ be the geodesic spray of a Finsler function $F_0$, which has
constant flag curvature $\kappa_0$. Denote by $h_0$ the horizontal
projector induced by the spray $S_0$ and $\nabla_0$ the corresponding dynamical
covariant derivative. The spray $S_0$ is isotropic, its
Jacobi endomorphism is given by 
\begin{eqnarray*}
\Phi_0 = \kappa_0(F_0^2 J - F_0 d_J F_0 \otimes
\mathbb{C}). \end{eqnarray*}
Consider the projectively equivalent spray $S=S_0 - 2 P
\mathbb{C}$, where $P \in C^{\infty}(T_0M)$ is a $1$-homogeneous
function. According to formulae (4.8) in \cite[Prop. 4.4]{BM12}, the
horizontal projector and the  Jacobi endomorphism of the spray $S$ are
given by  
\begin{eqnarray*}
h & = & h_0 - 2 (PJ + d_JP\otimes \mathbb{C}), \\
\Phi & = & \Phi_0  + (P^2 - S_0(P))  J + (2d_{h_0}P - Pd_JP - \nabla_0
d_JP) \otimes \mathbb{C}. \end{eqnarray*}
The spray $S$ is also isotropic. We will study now when the projective
deformations preserve the condition $d_J\alpha=0$,  where the $1$-form $\alpha$
is given by $\alpha=  \kappa_0 F_0 d_J F_0 - 2d_{h_0}P + Pd_JP + \nabla_0
d_JP $. The $2$-form $d_J\alpha$
has been computed in the proof of Proposition 4.4 in \cite{BM12} and
it is given by 
\begin{eqnarray}
d_J\alpha = 3d_Jd_{h_0}P = -3d_{h_0}d_JP. \label{djap}
\end{eqnarray} Therefore $d_J\alpha=0$ if and only if $d_{h_0}P=d_J
g$, for some function $g\in C^{\infty}(T_0M)$. For $g=P^2/2$ the function $P$ is called a \emph{Funk function}, see
\cite[Def. 12.1.4]{Shen01}. It has been shown in \cite[Prop. 12.1.3]{Shen01}
that projective deformations by a Funk function preserve the Jacobi
endomorphism. 

In order to simplify some of the calculations we assume that the
function $P$ satisfies $d_{h_0}P=0$ and hence $d_J\alpha=0$. It
follows that $S_0(P)=0$ and using the commutation formula (4.11) in \cite{BM12} we have 
$\nabla_0 d_JP=d_J \nabla_0 P - d_{h_0}P=0$. Therefore, the Jacobi
endomorphism $\Phi$ of the spray $S$ is given by
\begin{eqnarray} 
\Phi = (\kappa_0 F^2_0 + P^2) J - (\kappa_0 F_0d_{J} F_0 + Pd_JP) \otimes
\mathbb{C}. \label{phip} \end{eqnarray}
It follows that the Ricci scalar is given by 
$$ R=\kappa_0F_0^2 + P^2. $$ 

We check now the last condition of Theorem \ref{metrizables}. Using
the assumption that $d_{h_0}P=0$ it follows that $d_{h_0}R=0$ and
hence we have 
\begin{eqnarray}
d_hR =  - 2 d_{PJ + d_JP \otimes \mathbb{C}} R = -2 (Pd_JR + 2Rd_JP).  \label{dhrp}
\end{eqnarray}
We can choose a function $P$ such that $d_hR\neq 0$, which means
that the spray $S$ is not Ricci constant. According to
Theorem \ref{metrizables} the spray $S$ is not Finsler
metrizable. Indeed, we can take $P=\lambda F_0$, where $\lambda$ is a
constant. If we replace this in
formula \eqref{dhrp}, we obtain 
\begin{eqnarray}
d_hR =  -4\lambda (\kappa_0 + \lambda^2) d_JF^2_0.  \label{dhrl}
\end{eqnarray}
In this case we have that the spray $S$ is Ricci constant, and hence it
is Finsler metrizable, if and only if either $\lambda=0$ or
$\lambda^2+\kappa_0=0$. See also Theorem 5.1 in \cite{BM12}. When
the spray $S_0$ is projectively flat, we can view this as an alternative proof
of Theorem 1.2 in \cite{Yang11}. 

\subsection{The case $d_J\alpha \neq 0$.} \label{djano}
We present now an example of a Finsler metrizable, isotropic spray,
which does not satisfy the condition $d_J\alpha =0$. This also shows that the assumption $d_J\alpha =0$, which we made in Theorem
\ref{metrizables}, is the best assumption we could consider in order to have
the equivalence of the five conditions. 

We will use the following Randers metric studied by Shen, see Example
11.2 in \cite{Shen04}. Consider a domain $M \subset
\mathbb{R}^n$, where $\Delta(x) = 1 - |a|^2 |x|^4 >0$. Denote by
$\beta(x,y)=2<a,x> <x, y> - |x|^2 <a,y> $. The Finsler
function $F: M\times \mathbb{R}^n \to \mathbb{R}$, given by 
\begin{eqnarray*}
F(x,y)= \frac{\sqrt{\beta^2(x,y) + \Delta(x) |y|^2} +\beta(x,y)}{\Delta(x)}
\end{eqnarray*}
has scalar flag curvature given by 
\begin{eqnarray*}
\kappa(x,y) = 3\frac{<a,y>}{F} + 3<a,x>^2 -2|a|^2|x|^2. 
\end{eqnarray*}
The geodesic spray $S$ of the Finsler function $F$ is isotropic and
the $1$-form $\alpha$ in formula \eqref{isophi} is given by 
$ \alpha = \kappa Fd_JF $ and hence 
$$ d_J\alpha = F d_J\kappa \wedge d_J F. $$ 
The scalar flag curvature $\kappa$ is $0$-homogeneous and
therefore $0=\mathbb{C}(\kappa)=i_Sd_J\kappa$. Moreover the flag curvature
$\kappa$ depends on the flagpole $y$, which means that $d_J\kappa\neq 0$. Therefore,
$$ i_Sd_J\alpha = Fi_Sd_J\kappa d_JF - Fi_Sd_JF d_J\kappa = -
F^2d_J\kappa \neq 0,$$ 
and this implies that $d_J\alpha \neq 0$. Therefore, the spray $S$ is
Finsler metrizable and isotropic. However, the five conditions in
Theorem \ref{metrizables} are not equivalent and this is due to the
fact that $d_J\alpha \neq 0$.

\subsection{Two-dimensional examples}
We consider now some examples of sprays on a two-dimensional
manifold and use the conditions \eqref{fm2} to test if they are
metrizable by a Finsler function of constant curvature.

\subsubsection{The Poincar\'e model and the Finslerian Poincar\'e disk} Consider the geodesic equations of the Poincar\'e half plane
$M=\{(x^1, x^2)\in \mathbb{R}^2, x^2>0\}$:
\begin{eqnarray*}
\frac{d^2x^1}{dt^2} &-& \frac{2}{x^2} \frac{dx^1}{dt}\frac{dx^2}{dt} =
0, \\
\frac{d^2 x^2}{dt^2} &+&\frac{1}{x^2}\left(
  \left(\frac{dx^1}{dt}\right)^2 -  \left(\frac{dx^2}{dt}\right)^2
\right) = 0.
\end{eqnarray*}
The above system of second order ordinary differential equations
determines a spray $S\in \mathfrak{X}(TM)$. For this spray $S$, the local components \eqref{localphi} of the
Jacobi endomorphism  are given by
\begin{eqnarray*}
R^1_1= -\frac{(y^2)^2}{(x^2)^2}, \quad R^1_2 = R^2_1 =
\frac{y^1y^2}{(x^2)^2}, \quad R^2_2=
-\frac{(y^2)^2}{(x^2)^2}. \end{eqnarray*}
According to  first part of Theorem \ref{metrizables2}, it follows that
the spray $S$ is isotropic, with the two components of the semi-basic
$1$-form $\alpha$ given by formula \eqref{alpha12}:
\begin{eqnarray*}
\alpha_1=\frac{R^2_2}{y^1}=-\frac{y^1}{(x^2)^2}, \quad
\alpha_2=\frac{R^1_1}{y^2}=-\frac{y^2}{(x^2)^2}. \end{eqnarray*}
It is very easy to check that $d_J\alpha=0$ and hence the first
condition \eqref{fm2} is satisfied. The Ricci scalar of the spray
is given by 
\begin{eqnarray}
R=R^1_1+R^2_2=-\frac{1}{(x^2)^2}\{(y^1)^2+(y^2)^2\}.\label{rh2}\end{eqnarray}
 It follows that $d_hR=0$ and hence the second condition \eqref{fm2}
 is satisfied. By the second part of Theorem \ref{metrizables2} we
 obtain that $S$ is metrizable by a Finsler function of constant
 negative sectional curvature. From formula \eqref{rh2} it follows that, up to a
 multiplicative constant, the Finsler function $F$ is given by 
\begin{eqnarray*}
F(x,y)=\frac{1}{x^2}\sqrt{(y^1)^2+(y^2)^2}. \label{fh2}\end{eqnarray*} 
The above Finsler function is reducible to a Riemannian metric,
which we can recognize to be the Poincar\'e metric of the upper
half plane. 

Although the previous example is Riemannian, one can
modify it to obtain a Finslerian
 one. Consider the disk $M=\{(x^1, x^2)\in \mathbb{R}^2,
 (x^1)^2+(x^2)^2<4\}$  with the following Finsler function $F:TM \to [0,
 +\infty)$, known as the Finslerian Poincar\'e disk \cite{BCS00, BR04},
\begin{eqnarray*}
F=4\frac{\sqrt{(y^1)^2+(y^2)^2}}{4-r^2} + 16\frac{x^1y^2+x^2y^1}{(4-r^2)(4+r^2)},
\end{eqnarray*} where $r^2=(x^1)^2+(x^2)^2$. In the first term of the
right hand side of the above formula we can recognize the Poincar\'e
metric on the disk $M$. The above Finsler
function has constant flag curvature $\kappa=-1/4$, and its geodesic
spray satisfies all the assumptions of Theorem \ref{metrizables2}. 

\subsubsection{Non-constant Ricci scalar} We consider an example
proposed by Bao and Robles in \cite{BR04} of a two-dimensional spray,
which is metrizable by a Finsler function of non-constant Ricci
scalar. For this example, the first condition \eqref{fm2}  is
satisfied, but the second condition \eqref{fm2} it is not. 

Consider the portion of the elliptic paraboloid $M=\{(x^1, x^1, x^3)\in \mathbb{R}^3, x^3=(x^1)^2+(x^2)^2,
(x^1)^2+(x^2)^2<1\}$. We denote $\Delta(x^1, x^1)=1-(x^1)^2 -
(x^2)^2>0$. The Randers metric $F:TM\to [0, +\infty)$,
$F=\alpha+\beta$, where   
\begin{eqnarray*}
\alpha &=& \frac{\sqrt{(x^1y^2-x^2y^1)^2 +
    \left((1+4(x^1)^2)(y^1)^2 + 8x^1x^2 y^1y^2 + (1+4(x^2)^2)(y^2)^2
    \right)\Delta}}{\Delta}, \\ \beta &=&  \frac{x^2y^1-x^1y^2}{\Delta(x^1, x^2)}, \end{eqnarray*}
has scalar flag curvature
\begin{eqnarray*}
\kappa (x^1, x^2) = \frac{4}{(1+4(x^1)^2+4(x^2)^2)^2}. \end{eqnarray*}
Let $S$ be the geodesic spray of the Finsler function $F$ and $h$ the
horizontal projector. It follows
that $S$ is isotropic, its Jacobi endomorphism is given by formula
\eqref{isophi}, where 
\begin{eqnarray*} R=\kappa F^2, \quad \alpha =\kappa Fd_JF.\end{eqnarray*}
For the two-dimensional spray $S$,  we check now the conditions of the
Theorem \ref{metrizables2}. The Ricci scalar $R$ does no vanish in
$T_0M$. Since $d_J\kappa=0$ it follows that $d_J\alpha=0$ and hence
first condition \eqref{fm2} is satisfied. However, since
$d_h\kappa\neq 0$ and $d_hF=0$ it follows that $d_hR\neq 0$, which
shows that the spray $S$ is not Ricci constant, which means that
second condition \eqref{fm2} is not satisfied.

\subsection{The non-vanishing condition of the Ricci curvature} \label{nvrc}

In this subsection we discuss the non-vanishing condition of the Ricci
curvature, which we assumed in our results. We will explain that although the Ricci
curvature might vanish, if it is not identically zero, then the 
conditions of metrizability in Theorems \ref{metrizablesc}, \ref{metrizables} and
\ref{metrizables2}  are still necessary and sufficient, but the
metrizability to consider has to be with respect to a conic
pseudo-Finsler function, as defined in \cite{JS12}. 

Consider the following affine spray on some open domain $M\subset \mathbb{R}^2$, which is
Example 8.2.4 from \cite{Shen01}: 
\begin{eqnarray*}
S=y^1\frac{\partial}{\partial x^1} + y^2\frac{\partial}{\partial x^2}
- \phi(x^1, x^2) (y^1)^2 \frac{\partial}{\partial y^1} - \psi(x^1,
x^2) (y^2)^2 \frac{\partial}{\partial y^2}. \end{eqnarray*}  
For this spray $S$, the local components \eqref{localphi} of the
Jacobi endomorphism  are given by
\begin{eqnarray*}
R^1_1= -\phi_{x^2}y^1y^2, \quad R^1_2 = \phi_{x^2}(y^1)^2, \quad  R^2_1 =
\psi_{x^1} (y^2)^2, \quad R^2_2=
-\psi_{x^1}y^1y^2 . \end{eqnarray*}
The spray $S$ is isotropic and the two components of the semi-basic
$1$-form $\alpha$ are given by formula \eqref{alpha12}:
\begin{eqnarray*}
\alpha_1=\frac{R^2_2}{y^1}=-\psi_{x^1} y^2, \quad
\alpha_2=\frac{R^1_1}{y^2}=-\phi_{x^2} y^1. \end{eqnarray*} 
In view of the local formulae \eqref{fm2local}, the condition $d_J\alpha=0$ is satisfied if and only if
\begin{eqnarray*} \frac{\partial \alpha_1}{\partial y^2}=
  \frac{\partial \alpha_2}{\partial y^1}, \textrm{ which \ is
   \ equivalent \ to: \  } \psi_{x^1}= \phi_{x^2}. \label{phipsi} \end{eqnarray*}
Next, we assume that the above condition is satisfied. Within this
assumption, it follows that the Ricci scalar is given by 
\begin{eqnarray*} R=-y^1y^2(\phi_{x^2} + \psi_{x^1}) = -2y^1y^2 \phi_{x^2}= -2y^1y^2
  \psi_{x^1}. \end{eqnarray*}
To test the second condition \eqref{fm2}, we use the local expression
\eqref{fm2local}:
\begin{eqnarray*} 
\frac{\delta R}{\delta x^1} = 2y^1y^2 (\phi\phi_{x^2} -
\phi_{x^1x^2}), \quad \frac{\delta R}{\delta x^2} = 2y^1y^2 (\psi\psi_{x^1} -
\psi_{x^1x^2}) 
\end{eqnarray*}
For functions $\phi,  \psi \in C^{\infty}(M)$ consider the following
system of partial differential equations:
\begin{eqnarray}
\phi_{x^2}=\psi_{x^1}, \quad \phi_{x^1x^2}- \phi \phi_{x^2} =0, \quad
\psi_{x^1x^2}- \psi \psi_{x^1} =0. \label{phipsi} \end{eqnarray}
For example on $M=\{(x^1, x^2)\in \mathbb{R}^2, x^1+x^2>0\}$ we can
consider the functions 
\begin{eqnarray} \phi(x^1, x^2)=\psi(x^1,
  x^2)=-\frac{2}{x^1+x^2}, \label{exphi} \end{eqnarray} which satisfy
the above system of partial differential equations. For this choice of
functions, the spray 
\begin{eqnarray*}
S=y^1\frac{\partial}{\partial x^1} + y^2\frac{\partial}{\partial x^2}
+ \frac{2(y^1)^2}{x^1+x^2}\frac{\partial}{\partial y^1} + \frac{2(y^2)^2}{x^1+x^2}\frac{\partial}{\partial y^2} \end{eqnarray*}  
satisfies the two conditions \eqref{fm2} of Theorem
\ref{metrizables2}. Since the Ricci curvature of the spray $S$ is given
by 
\begin{eqnarray*} R=-\frac{4y^1y^2}{(x^1+x^2)^2}, \end{eqnarray*}
it follows that the spray $S$ is metrizable by a conic pseudo-Finsler
function, see \cite{JS12}, $F: A=\{(x^1,x^2, y^1, y^2)\in TM, y^1y^2>0\} \subset TM \to [0, +\infty)$, where  
\begin{eqnarray*} F(x,y)=\frac{4y^1y^2}{(x^1+x^2)^2}, \end{eqnarray*}
has constant sectional curvature $\kappa=-1$.

\subsection*{Acknowledgments} The work of I.B. has been supported by the
Romanian National Authority for Scientific Research,
CNCS UEFISCDI, project number
PN-II-ID-PCE-2012-4-0131. The work of
Z.M. has been supported by the Hungarian Scientific Research Fund (OTKA) Grant K67617.

\end{document}